\newtheorem{theorem}{Theorem}[section]
\theoremstyle{definition}
\newtheorem{definition}[theorem]{Definition}
\newtheorem{lemma}[theorem]{Lemma}
\newtheorem{remark}[theorem]{Remark}
\begin{document}

\title{On ordered sequences for link diagrams with respect to Reidemeister moves I and III}
\author{Kishin Sasaki}
\maketitle

\begin{abstract}
We first prove that, infinitely many pairs of trivial knot diagrams that are transformed into each other by applying Reidemeister moves I and III are NOT transformed into each other by a sequence of the Reidemeister moves I that increase the number of crossings, followed by a sequence of Reidemeister moves III, followed by a sequence of the Reidemeister moves I that decrease the number of crossings. To create a simple sequence between link diagrams that are transformed into each other by applying finitely many Reidemeister moves I and III, we prove that the link diagrams are always transformed into each other by applying an I-generalized ordered sequence. 
\end{abstract}

\section{Introduction}

In \cite{reidemeister}, K. Reidemeister introduced Reidemeister moves, which are local moves on link diagrams, and proved that link diagrams of equivalent links are transformed into each other by applying finitely many Reidemeister moves (and vice versa). Figure \ref{fig:reidemeister+-} shows all the Reidemeister moves, where $\uparrow$ and $\downarrow$ are assigned to the moves that increase and decrease the numbers of crossings, respectively.

\begin{figure}[H]
 \begin{center}
 \includegraphics[width=12cm,keepaspectratio]{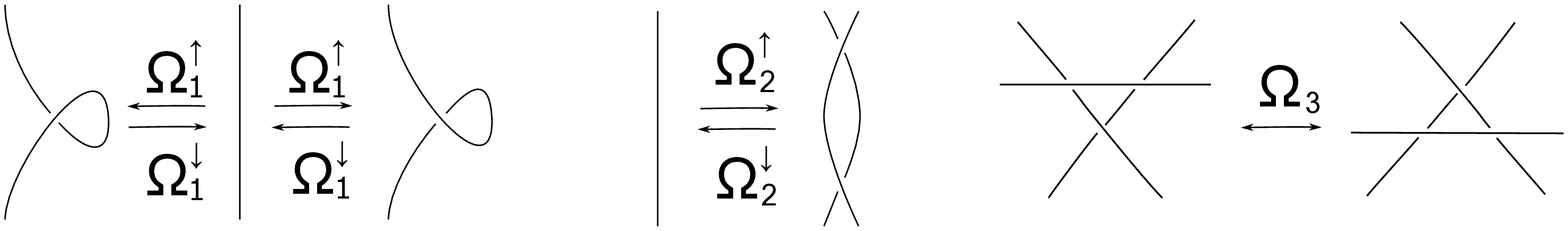}
 \end{center}
 \caption{Reidemeister moves}
 \label{fig:reidemeister+-}
\end{figure}

In \cite{ordered_sequence}, A. Coward proved that link diagrams of equivalent links are always transformed into each other by applying a sequence of $\Omega_{1}^{\uparrow}$ moves, followed by a sequence of $\Omega_{2}^{\uparrow}$ moves, followed by a sequence of $\Omega_{3}$ moves, followed by a sequence of $\Omega_{2}^{\downarrow}$ moves. 
In the proof process, the author proved that, link diagrams that are transformed into each other by applying finitely many Reidemeister moves II and III are always transformed into each other by applying a sequence of $\Omega_{2}^{\uparrow}$ moves, followed by a sequence of $\Omega_{3}$ moves, followed by a sequence of $\Omega_{2}^{\downarrow}$ moves. Next question is natural from this point of view. Is there such a simple sequence between link diagrams that are transformed into each other by applying finitely many Reidemeister moves I and III? In this paper, we discuss such simple sequences. There are some related papers in this topic (see \cite{upper_bound}, \cite{sequence}, \cite{lower}, \cite{bound}, \cite{minimal}).

In Section \ref{sec:preliminaries}, we prepare definitions and theorems that are necessary for the main theorems in this paper. In Section \ref{sec:ordinary_ordered}, we prove that, infinitely many pairs of trivial knot diagrams that are transformed into each other by applying Reidemeister moves I and III are NOT transformed into each other by a sequence of Reidemeister moves I, followed by a sequence of Reidemeister moves III, followed by a sequence of Reidemeister moves I. To create a simple sequence between link diagrams that are transformed into each other by applying finitely many Reidemeister moves I and III, in Section \ref{sec:I-generalized}, we prove that the link diagrams are always transformed into each other by applying a sequence of $\Omega_{1}^{\uparrow}$ moves, followed by a sequence of $\Omega_{3}^{*}$ moves (which are $\Omega_{3}$ moves modified by $\Omega_{1}$ moves), followed by a sequence of $\Omega_{1}^{\downarrow}$ moves.

\section{Premilinaries}\label{sec:preliminaries}
In this section, we check definitions and theorems that are necessary for the main theorems in this paper.

The following theorem is well known in knot theory.

\begin{theorem}[K. Reidemeister \cite{reidemeister}, 1927] \label{theo:reidemeister}
Every pair of diagrams of equivalent links can be transformed into each other by applying finitely many of the local moves appeared in Figure \ref{fig:reidemeister}.
Furthermore, two link diagrams that are transformed into each other by applying finitely many of the local moves appeared in Figure \ref{fig:reidemeister} represent equivalent links.
\end{theorem}

\begin{figure}[H]
 \begin{center}
 \includegraphics[width=9cm,keepaspectratio]{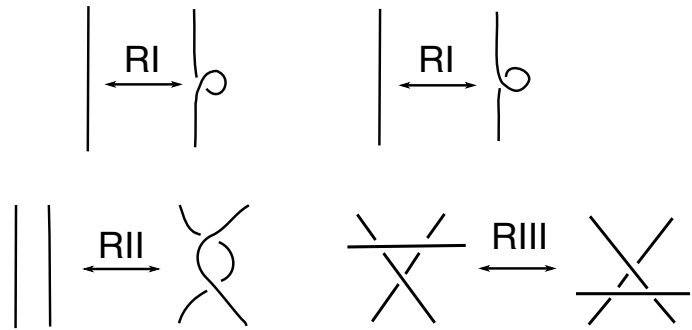}
 \end{center}
 \caption{Reidemeister moves}
 \label{fig:reidemeister}
\end{figure}

In 2006, A. Coward proved the following theorem.

\begin{theorem}[A. Coward \cite{ordered_sequence}, 2006] \label{theo:orderedtheorem}
Given two diagrams $D_{1}$ and $D_{2}$ for a link $L$, $D_{1}$ may be turned into $D_{2}$ by a sequence of $\Omega^{\uparrow}_{1}$ moves, followed by a sequence of $\Omega^{\uparrow}_{2}$ moves, followed by a sequence of ${\Omega}_{3}$ moves, followed by a sequence of $\Omega^{\downarrow}_{2}$ moves.\\[10pt]
Furthermore, if $D_{1}$ and $D_{2}$ are diagrams of a link where the winding number and framing of each component is the same in each diagram, then $D_{1}$ may be turned into $D_{2}$ by a sequence of $\Omega^{\uparrow}_{2}$ moves, followed by a sequence of $\Omega_{3}$ moves, followed by a sequence of $\Omega^{\downarrow}_{2}$ moves.
\end{theorem}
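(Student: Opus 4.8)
The plan is to work with the \emph{movie} (a generic $1$-parameter family of diagrams) realizing a Reidemeister sequence and to reorganize its singularities Morse-theoretically. By Theorem \ref{theo:reidemeister} there is a finite sequence of Reidemeister moves from $D_{1}$ to $D_{2}$; realize it by a generic ambient isotopy of the projected link so that, over the time interval $[0,1]$, the crossings sweep out a collection of disjoint embedded arcs and circles — the \emph{double-point curves} — inside the movie space $\mathbb{R}^{2}\times[0,1]$. In this language the four move types are exactly the codimension-one events of the height (time) function: an $\Omega_{2}^{\uparrow}$ move is a local minimum of time on a double-point curve (a crossing is born), an $\Omega_{2}^{\downarrow}$ is a local maximum (a crossing dies), an $\Omega_{3}$ is a moment where three double-point curves meet at a triple point, and an $\Omega_{1}$ is where a double-point curve runs into a fold/cusp of the projection. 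The desired ordered form — all births first, all triple points in the middle, all deaths last — is precisely the statement that the time function restricted to the union of double-point curves can be put in a normal form with all minima below all triple points below all maxima.

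First I would establish the \emph{commutation} lemmas: two Reidemeister moves supported in disjoint disks of the plane may be performed in either order, so adjacent moves in the sequence may be transposed whenever their supports are disjoint. This reduces the reordering to a rearrangement problem on the double-point curves rather than on the literal move list. Next I would run a Morse-theoretic simplification of the time function on these curves: cancel a max--min pair whenever the corresponding birth and death can be isotoped past one another, and slide triple points into a single middle band. The content is that whenever two events cannot be commuted directly — because a death would have to pass through a birth, or a triple point is trapped between a max and a min — one can insert a pair of $\Omega_{1}$ moves to reroute the offending strand, trading the obstruction for a curl. This is exactly what forces the leading $\Omega_{1}^{\uparrow}$ block and the trailing $\Omega_{1}^{\downarrow}$ block in the first statement.

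For the \emph{furthermore} part I would track the two invariants that $\Omega_{1}$ moves change but $\Omega_{2}$ and $\Omega_{3}$ moves preserve: the writhe/framing of each component and its winding number (the turning number of the component's projection). An $\Omega_{1}$ move changes each of these by $\pm 1$, while $\Omega_{2}$ and $\Omega_{3}$ leave both fixed. Hence if $D_{1}$ and $D_{2}$ already agree on framing and winding number on every component, the $\Omega_{1}^{\uparrow}$ and $\Omega_{1}^{\downarrow}$ moves produced above occur in matching numbers on each component, and I would pair each inserted up-curl with a down-curl and cancel them by an explicit local isotopy (a curl slid around the component and undone), leaving only a sequence of the shape $\Omega_{2}^{\uparrow}$, then $\Omega_{3}$, then $\Omega_{2}^{\downarrow}$.

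The hard part will be the middle step: showing that the Morse simplification terminates and genuinely yields the strict ordering births $<$ triple points $<$ deaths, rather than merely reducing the number of critical points. The danger is that eliminating one bad pair creates new double-point curves or new triple points, so I would need a carefully chosen complexity — for instance a lexicographic measure counting deaths lying below triple points, then total crossing count — that strictly decreases under each simplification, together with an explicit local model for every elementary rerouting to guarantee that no uncontrolled new singularities appear. Controlling the interaction of the $\Omega_{1}$ reroutings with nearby triple points, so that inserting a curl to free one trapped event does not trap another, is where I expect the argument to demand the most care.
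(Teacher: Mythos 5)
A preliminary remark: the paper you were given never proves Theorem \ref{theo:orderedtheorem}; it is quoted verbatim from Coward \cite{ordered_sequence}. So your attempt has to be measured against Coward's published argument, not against anything in this paper.

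The decisive gap is that your plan targets the wrong normal form. The first statement of the theorem ends with the $\Omega_{2}^{\downarrow}$ block: there are \emph{no} decreasing $\Omega_{1}$ moves anywhere in the sequence. Your sketch is explicitly engineered to output ``the leading $\Omega_{1}^{\uparrow}$ block and the trailing $\Omega_{1}^{\downarrow}$ block,'' i.e.\ a sequence of the shape $\Omega_{1}^{\uparrow},\Omega_{2}^{\uparrow},\Omega_{3},\Omega_{2}^{\downarrow},\Omega_{1}^{\downarrow}$. That is a strictly weaker conclusion, and the entire point of Coward's theorem is that decreasing $\Omega_{1}$ moves can be dispensed with altogether; a construction that leaves a trailing $\Omega_{1}^{\downarrow}$ block has not proved the statement. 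Relatedly, your logical architecture is inverted relative to the one that actually works. Coward proves the ``furthermore'' (regular-isotopy) statement first, and the general statement is then an easy corollary: since $\Omega_{1}^{\uparrow}$ moves supply kinks realizing all four combinations $(\pm 1,\pm 1)$ of (framing change, winding-number change), one first applies $\Omega_{1}^{\uparrow}$ moves to $D_{1}$ until its framing and winding number match those of $D_{2}$ on every component, and then invokes the regular-isotopy statement --- this is exactly where the leading $\Omega_{1}^{\uparrow}$ block comes from, and why nothing is needed at the tail. You instead try to prove the general statement first and then recover the ``furthermore'' by cancelling up-curls against down-curls ``by an explicit local isotopy.'' That cancellation is not local: the paired curls sit at opposite ends of the sequence, separated by all of the $\Omega_{2}$ and $\Omega_{3}$ moves, so cancelling them means transporting a curl through the whole ordered sequence, past crossings and triple points, while preserving the ordered structure. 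That is at least as hard as the theorem itself, and nothing in your sketch addresses it. (Your counting claim is also imprecise: equal framing and winding numbers force the \emph{signed} sums of the curls' contributions to vanish, not the numbers of up- and down-curls; e.g.\ two $\Omega_{1}^{\uparrow}$ curls of types $(+1,+1)$ and $(-1,-1)$ have zero net effect with no $\Omega_{1}^{\downarrow}$ move at all.)

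Beyond the structural problem, the technical core is left open by your own admission: the Morse-theoretic rearrangement putting all births below all triple points below all deaths is precisely the content of the regular-isotopy statement, and you supply neither the complexity measure that makes the simplification terminate nor local models showing that inserting $\Omega_{1}$ reroutings does not create new obstructions. As written, the proposal is a plausible-sounding plan whose two hard steps (the middle rearrangement and the curl cancellation) are both deferred, and whose output, even if both were carried out, would be a weaker normal form than the one claimed. A workable repair would be to abandon the curl-cancellation direction, prove the regular-isotopy statement directly (by induction on the number of moves with commutation lemmas, in the spirit of Lemmas \ref{lem:move_1}--\ref{lem:move_3_seq} of this paper, or by your Morse picture made rigorous), and then deduce the first statement by the prepending argument above.
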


\section{Ordinary ordered sequences between RI-III related link diagrams}\label{sec:ordinary_ordered}
In this section, we prove that infinitely many pairs of trivial knot diagrams that are transformed into each other by applying Reidemeister moves I and III are NOT transformed into each other by a sequence of the Reidemeister moves I that increase the number of crossings, followed by a sequence of Reidemeister moves III, followed by a sequence of the Reidemeister moves I that decrease the number of crossings. 

\begin{definition}
\label{def:RI-III_related}
\emph{RI-III related} link diagrams are defined to be link diagrams that are transformed into each other by applying finitely many Reidemeister moves I and III.  
\end{definition}

\begin{definition}
\emph{A region} is defined to be a connected complement of link diagrams in $\mathbb{S}^{2}$. \emph{Adjacent crossings to a region} are defined to be crossings that are adjacent to the region. For instance, in Figure \ref{fig:trivial_knot_diagram}, the region with a yellow color has 4 adjacent crossings $c_{1}$, $c_{2}$, $c_{3}$, $c_{4}$.

\begin{figure}[H]
 \begin{center}
   \includegraphics[width=0.4\linewidth,keepaspectratio]{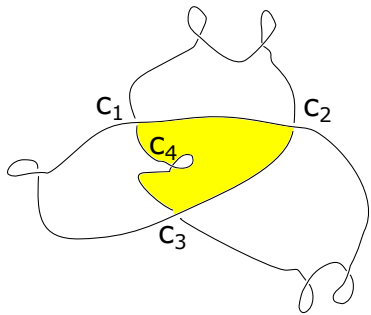}
 \end{center}
 \caption{a trivial knot diagram}
 \label{fig:trivial_knot_diagram}
\end{figure}
\end{definition}

\begin{definition}
\emph{A colored link diagram} is defined to be a link diagram that is decorated with gray disks and black squares as in Figure \ref{fig:counter_example}. \emph{Colored Reidemeister moves I and III} are defined to be Reidemeister moves I and III with gray disks and black squares as in Figure \ref{fig:colored_Reidemeister}. Note that, in Figure \ref{fig:colored_Reidemeister}, the outsides of the local disks are the same including the locations of gray disks and black squares, and that the local disks of $\Omega_{1}$ moves located left below in Figure \ref{fig:colored_Reidemeister} are covered by gray disks. We use colored link diagrams and colored Reidemeister moves defined here to prove Theorem \ref{theo:counter_theorem}.

\begin{figure}[H]  
  \begin{center}
\includegraphics[width=11cm,keepaspectratio]{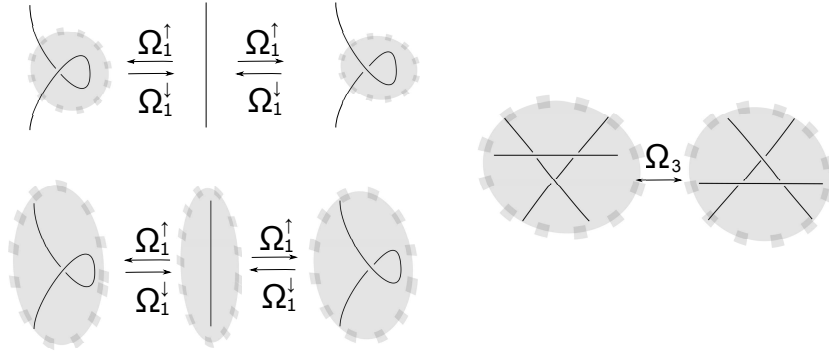}
  \end{center}
  \caption{colored $\Omega_{1}$ moves and colored $\Omega_{3}$ move}
 \label{fig:colored_Reidemeister}
 \end{figure}  
\end{definition}

\begin{theorem}\label{theo:counter_theorem}
There exists infinitely many pairs of RI-III related trivial knot diagrams that may NOT be transformed into each other by applying a sequence of $\Omega^{\uparrow}_{1}$ moves, followed by a sequence of $\Omega_{3}$ moves, followed by a sequence of $\Omega^{\downarrow}_{1}$ moves.
\end{theorem}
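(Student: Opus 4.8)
The plan is to construct an explicit infinite family of RI-III related trivial knot diagrams and then exhibit an invariant that is preserved by the restricted ordered sequence ($\Omega_1^{\uparrow}$, then $\Omega_3$, then $\Omega_1^{\downarrow}$) but which genuinely differs between the two members of each pair. Let me think about what such an invariant could be and how to build the family.

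The core obstruction to the ordered sequence is monotonicity of crossing number: along the prescribed sequence the crossing number first rises (only $\Omega_1^{\uparrow}$), then stays constant ($\Omega_3$ preserves crossing number), then falls (only $\Omega_1^{\downarrow}$). So the crossing count, as a function of the step, is unimodal — it increases to a single plateau and then decreases. The whole strategy should be to find a quantity that CANNOT move freely under this unimodal profile but which must change to get from $D_1$ to $D_2$. The coloring machinery (gray disks and black squares) strongly suggests the intended invariant is combinatorial and carried by the decorations: one assigns gray disks/black squares to the two diagrams and checks that colored $\Omega_1$ and colored $\Omega_3$ moves preserve some count (a parity, or the placement pattern of squares relative to disks), while also showing that the starting and target colored diagrams carry incompatible values of that count.

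First I would fix the construction: take $D_1$ to be something like the $n$-twist region trivial diagram in Figure \ref{fig:trivial_knot_diagram} and let $D_2$ be its mirror or a relabeled version obtained by an honest RI-III sequence, producing one pair for each $n$ (hence infinitely many). Then I would lift both diagrams to colored diagrams by the rule encoded in Figure \ref{fig:counter_example}, placing gray disks on the kink regions created by $\Omega_1$ and black squares to mark the crossings/regions that the $\Omega_3$ moves are allowed to touch. The key lemmas to establish, in order, are: (i) every colored $\Omega_1^{\uparrow}$ move can only create a crossing inside a gray disk and every colored $\Omega_1^{\downarrow}$ move can only remove such a crossing, so the $\Omega_1$ moves never disturb the black-square data; (ii) a colored $\Omega_3$ move permutes the three participating strands without changing the number of black squares adjacent to any region, i.e. it preserves the relevant count; and (iii) the ordered sequence, because its $\Omega_1$ phase at the start only adds gray-disk crossings and its $\Omega_1$ phase at the end only deletes them, forces the $\Omega_3$ phase to carry all the real work — and at that stage the black-square invariant of $D_1$ and $D_2$ must already agree, which they do not.

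The main obstacle, and the step I expect to be most delicate, is pinning down a decoration-based invariant that is simultaneously (a) strictly invariant under colored $\Omega_3$ together with the constrained $\Omega_1$ moves, and (b) provably different for $D_1$ and $D_2$, while being robust to the fact that the intermediate diagram at the top of the unimodal crossing profile can be arbitrarily complicated. The subtlety is that in a genuine RI-III sequence the $\Omega_1$ and $\Omega_3$ moves are interleaved, so the invariant must be one that only an interleaved sequence (not the segregated ordered one) can alter; the coloring is exactly the device that detects whether an $\Omega_3$ move was applied "at a crossing born from a kink." I would therefore verify that in the segregated sequence no $\Omega_3$ move can ever involve a crossing lying in a gray disk (since all such crossings are created only in the first phase and destroyed only in the last phase, and an $\Omega_3$ in the middle phase would have to act on a configuration that the coloring rule forbids), whereas reaching $D_2$ from $D_1$ provably requires precisely such an interaction. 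Establishing this forbidden-interaction fact carefully, via a case analysis on the three local pictures of the colored $\Omega_3$ move in Figure \ref{fig:colored_Reidemeister}, is where the real content of the argument lies.
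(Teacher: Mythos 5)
There is a genuine gap: your proposal is a strategy outline whose central object is never constructed. Everything hinges on ``a decoration-based invariant that is (a) strictly invariant under the colored moves and (b) provably different for $D_{1}$ and $D_{2}$,'' but you never say what that invariant is, and you yourself defer both (a) and (b) as ``the real content of the argument.'' That deferred content is essentially the whole theorem. The paper's proof supplies it concretely: it takes $D_{1}$ to be the $n$-fold connected sum of the decorated diagram of Figure \ref{fig:counter_example} and takes $D_{2}$ to be the crossingless diagram (not a mirror or a relabeling), and the invariant is simply that every region marked with a black square persists and keeps at least $4$ (in the final phase, at least $2$) adjacent crossings throughout the colored sequence; such a region can never serve as the monogon of an $\Omega_1^{\downarrow}$ move nor as the triangle of an $\Omega_3$ move, so it is never destroyed, while the crossingless target diagram has no region adjacent to any crossing at all. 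Your choice of $D_{2}$ as ``its mirror or a relabeled version'' would not obviously work even with such an invariant in hand: a mirror image has identical region/crossing combinatorics, so a decoration-based count has no reason to distinguish the two diagrams, and you also never verify that such a pair is RI-III related in the first place (the paper's pair is, trivially, since the kinks and the remaining crossings can be undone by moves I and III).

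Second, the one concrete mechanism you do commit to --- the ``forbidden-interaction fact'' that in the segregated sequence no $\Omega_3$ move can ever involve a crossing lying in a gray disk --- is false, and it is the opposite of what actually happens. Nothing about the ordering or about the coloring prevents an $\Omega_3$ move from acting on a crossing born from a kink: sliding a strand across a kink crossing is a perfectly legal $\Omega_3$ move, and indeed in the paper's argument the $\Omega_3$ moves of the middle phase take place \emph{only inside} the gray disks (this is precisely why they can be replaced by colored $\Omega_3$ moves that leave the black squares untouched, the black-square regions having too many adjacent crossings to be the triangle). The paper's contradiction is not that the ordered sequence is forced into an interaction the coloring forbids; it is that the ordered sequence can never eliminate the black-square regions. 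The obstruction is persistence of marked regions, not exclusion of $\Omega_3$ moves from the gray disks, so even a careful case analysis of the colored $\Omega_3$ pictures in Figure \ref{fig:colored_Reidemeister} would be aimed at proving a statement that is untrue.
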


\begin{proof}

We first consider the trivial knot diagram as in Figure \ref{fig:counter_example}. 

\begin{figure}[H]
 \begin{center}
   \includegraphics[width=0.4\linewidth,keepaspectratio]{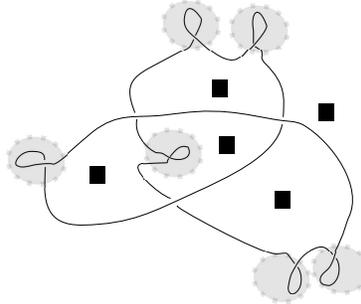}
 \end{center}
 \caption{trivial knot diagram that produces the examples of Theorem \ref{theo:counter_theorem}}
 \label{fig:counter_example}
\end{figure}

The diagram in Figure \ref{fig:counter_example} is decorated with gray disks and black squares. Each gray disk in the trivial knot diagram contains a monogon (i.e. the shape of Reidemeister move I). Each black square indicates one region of the diagram. By deleting all the crossings in the gray disks with $\Omega^{\downarrow}_{1}$ moves, we may easily see that there is a sequence of Reidemeister moves I and III that transforms the diagram depicted in Figure \ref{fig:counter_example} into the trivial knot diagram without a crossing. 

As in Figure \ref{fig:counter_example2}, we consider $n$-times connected sums of the trivial knot diagram depicted in Figure \ref{fig:counter_example}. Note that every region with a black square in the diagram has greater than or equal to $4$ adjacent crossings.

\begin{figure}[H]
 \begin{center}
   \includegraphics[width=11cm,keepaspectratio]{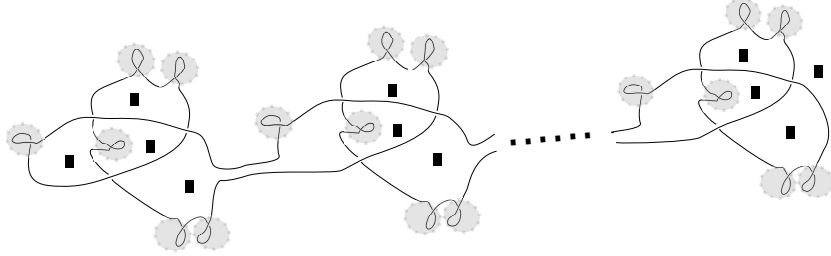}
 \end{center}
  \caption{$n$-times connected sums of the trivial knot diagram in Figure \ref{fig:counter_example}}
  \label{fig:counter_example2}
  \end{figure}

 We assume that, the trivial knot diagram depicted in Figure \ref{fig:counter_example2} that has neither gray disks nor black squares may be turned into the trivial knot diagram without a crossing by a sequence of $\Omega_{1}^{\uparrow}$ moves, followed by a sequence of $\Omega_{3}$ moves, followed by a sequence of $\Omega_{1}^{\downarrow}$ moves. To clarify the contradiction of the assumption, we are going to add gray disks and black squares to the trivial knot diagram we assumed above as in Figure \ref{fig:counter_example2}, and replace the ordinary ordered sequence with the ordinary ordered sequence of colored Reidemeister moves, which is the same sequence as the previous ordinary ordered sequence when we ignore gray disks and black squares. We may replace $\Omega_{1}^{\uparrow}$ moves in the ordinary ordered sequence with colored $\Omega_{1}$ moves which are the same moves as the $\Omega_{1}^{\uparrow}$ moves when we ignore gray disks and black squares, since colored $\Omega_{1}^{\uparrow}$ moves defined in Figure \ref{fig:colored_Reidemeister} have all possibilities of locations of gray disks. (To be clear, when the local 
 disk where we apply an $\Omega_{1}^{\uparrow}$ move in the ordinary ordered sequence is covered by a gray disk, we apply the colored $\Omega_{1}^{\uparrow}$ move located at the lower left in Figure \ref{fig:colored_Reidemeister} that is the same move as the $\Omega_{1}^{\uparrow}$ move when we ignore gray disks and black squares. When the local disk where we apply an $\Omega_{1}^{\uparrow}$ move in the ordered sequence is not covered by a gray disk, by adjusting the location of the local disk, the local disk may be taken as not being covered by a gray disk. In this case, we apply the colored $\Omega_{1}^{\uparrow}$ move located at the upper left in Figure \ref{fig:colored_Reidemeister} that is the same move as the $\Omega_{1}^{\uparrow}$ move when we ignore gray disks and black squares.) By applying the sequence of colored $\Omega_{1}^{\uparrow}$ moves as written above, the resulting colored diagram is as in Figure \ref{fig:counter_example4}, where finitely many gray disks are added and any region with a black square is NOT covered by a gray disk.

\begin{figure}[H]
 \begin{center}
   \includegraphics[width=12cm,keepaspectratio]{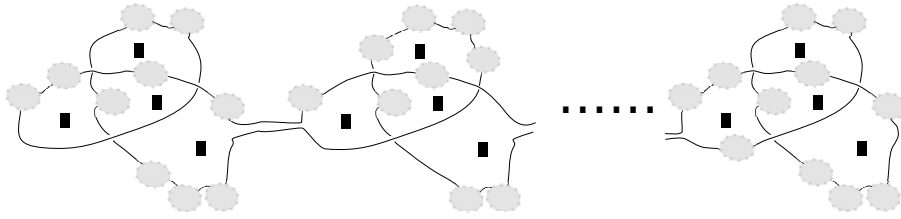}
 \end{center}
  \caption{the resulting colored trivial knot diagram after applying the sequence of colored $\Omega_{1}^{\uparrow}$ moves}
  \label{fig:counter_example4}
  \end{figure}

$\Omega_{3}$ moves in the ordinary ordered sequence may be replaced with colored $\Omega_{3}$ moves which are the same moves as the $\Omega_{3}$ moves when we ignore gray disks and black squares, since every region with a black square have greater than or equal to $4$ vertices in the process and the $\Omega_{3}$ moves occur only inside the gray disks.  

$\Omega_{1}^{\downarrow}$ moves in the ordinary ordered sequence may be replaced with colored $\Omega_{1}^{\downarrow}$ moves which are the same moves as the $\Omega_{1}^{\downarrow}$ moves when we ignore gray disks and black squares, since every region with a black square has greater than or equal to $2$ vertices in the process and the $\Omega_{1}^{\downarrow}$ moves occur only inside the gray disks.

In the whole process written above, every region with a black square does not vanish.
This fact contradicts the assumption that the trivial knot diagram depicted in Figure \ref{fig:counter_example2} (that has neither gray disks nor black squares) may be turned into the trivial knot diagram without a crossing by a sequence of $\Omega_{1}^{\uparrow}$ moves, followed by a sequence of $\Omega_{3}$ moves, followed by a sequence of $\Omega_{1}^{\downarrow}$ moves. Thus, there is no ordinary ordered sequence that transforms the diagram depicted in Figure \ref{fig:counter_example2} into the trivial knot diagram without a crossing.

\end{proof}

\section{I-generalized ordered sequences between RI-III related link diagrams}\label{sec:I-generalized}
In this section, we define an I-generalized ordered sequence and prove that RI-III related link diagrams are always transformed into each other by applying an I-generalized ordered sequence.
 
\begin{definition}\label{def:rectangle}

\emph{A rectangle}, as in Figure \ref{fig:rectangle}, is defined to be an $1$-tangle diagram that is transformed from a trivial $1$-tangle diagram without a crossing by applying finitely many $\Omega_{1}^{\uparrow}$ moves in this paper. \emph{A Reidemeister move III*} is defined to be a Reidemeister move III with three rectangles as in Figure \ref{fig:rectangles}. Note that, in Figure \ref{fig:rectangles}, the two rectangles with the same color are equivalent as $1$-tangle diagrams. See Figure \ref{fig:RIII_example} for an example of a Reidemeister move III*.

\begin{figure}[H]
\begin{center}
\includegraphics[width=0.7\linewidth,clip]{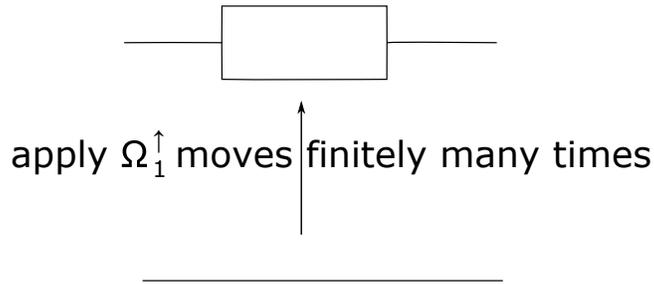}
\end{center}
\caption{A \emph{rectangle} drawn in this figure is defined to be an $1$-tangle diagram that is transformed from a trivial $1$-tangle diagram without a crossing by applying finitely many $\Omega_{1}^{\uparrow}$ moves }
\label{fig:rectangle}
\end{figure}

\begin{figure}[H]
\begin{center}
\includegraphics[width=9cm,keepaspectratio]{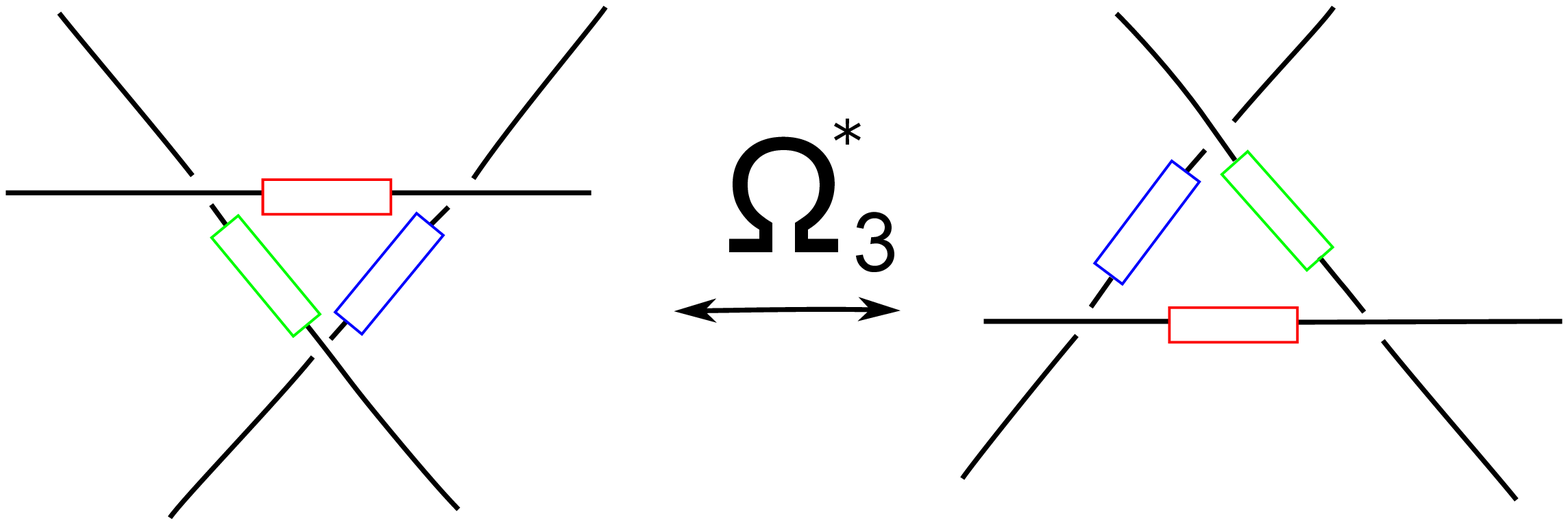}
\end{center}
\caption{An $\Omega_{3}$ move with three rectangles}
\label{fig:rectangles}
\end{figure}

\begin{figure}[H]
\begin{center}
\includegraphics[width=9cm,keepaspectratio]{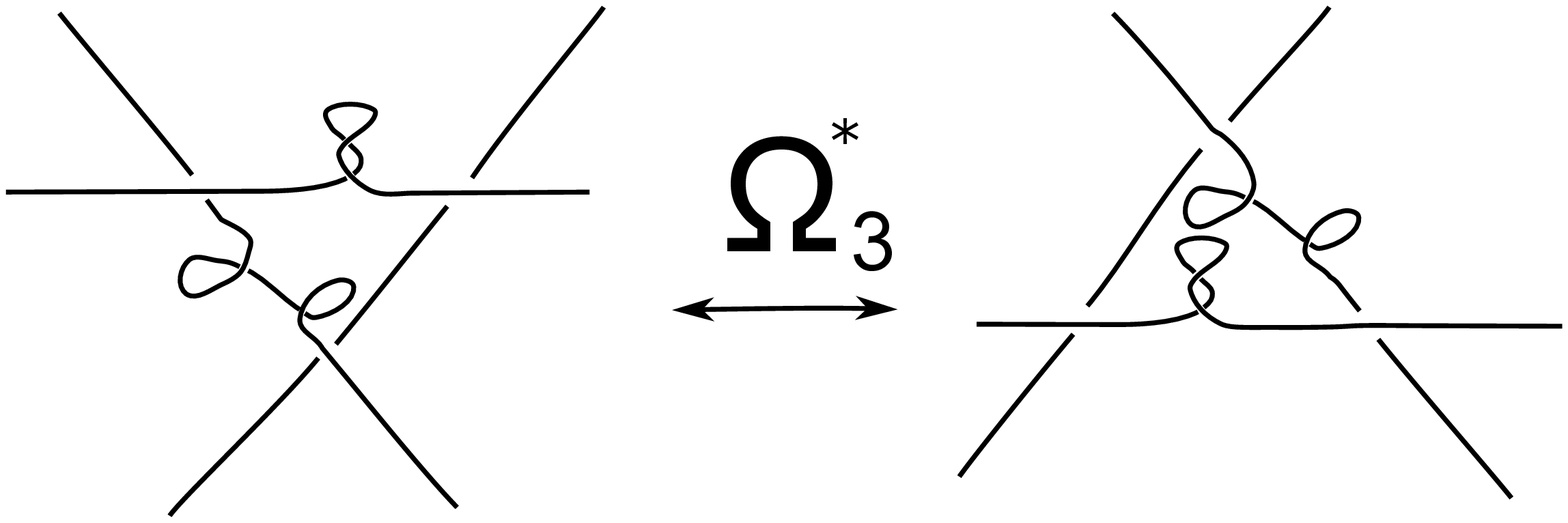}
\end{center}
\caption{An example of an $\Omega_{3}^{*}$ move}
\label{fig:RIII_example}
\end{figure}

\end{definition}

\begin{definition}\label{def:I-generalized}
\emph{An I-generalized ordered sequence} is defined to be a sequence of $\Omega_{1}^{\uparrow}$ moves, followed by a sequence of $\Omega_{3}^{*}$ moves, followed by a sequence of $\Omega_{1}^{\downarrow}$ moves in this paper. Note that the definition of an I-generalized ordered sequence includes no move between the same two link diagrams. 
\end{definition}

\begin{lemma}\label{lem:move_1}
Let $D$ be a link diagram. Let $D_{1}$ and $D_{2}$ be two link diagrams, each of which is transformed from $D$ by an $\Omega_{1}^{\uparrow}$ move. Then, there exists a link diagram $D_{3}$ that is transformed from $D_{1}$ and $D_{2}$ by an $\Omega_{1}^{\uparrow}$ move. See Figure $\ref{fig:1-moves}$ (a) for the overview.

\end{lemma}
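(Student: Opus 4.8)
The plan is to prove the statement as a local confluence (a ``diamond'') property for $\Omega_1^\uparrow$ moves, exploiting the fact that each such move is supported in a small disk and that two moves with disjoint supports commute. Recall that an $\Omega_1^\uparrow$ move is performed inside a disk $B \subset \mathbb{S}^2$ whose intersection with the diagram is a single embedded arc, and inside which a monogon (kink) is created; outside $B$ the diagram is left unchanged. I will write $m_1$ and $m_2$ for the two moves taking $D$ to $D_1$ and to $D_2$ respectively, with supporting disks $B_1$ and $B_2$.

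First I would reduce to the case in which the supporting disks are disjoint. If $m_1$ and $m_2$ act on distinct edges of the underlying $4$-valent graph of $D$, their supports are already disjoint after shrinking. If they act on the same edge, I would use the freedom, up to planar isotopy of diagrams, to place the two kinks at two distinct interior points of that edge; since the disk of an $\Omega_1^\uparrow$ move can be taken arbitrarily small and positioned anywhere along the chosen sub-arc, this yields representatives of $m_1$ and $m_2$ with $B_1 \cap B_2 = \emptyset$ while preserving $D_1$ and $D_2$ up to isotopy. This step is where the only real subtlety lies: one must check that even when the two moves are nominally ``at the same place'' (for instance adding kinks of different type or sign to the same strand), the resulting diagrams are unaffected by sliding one kink past the chosen basepoint, which holds precisely because link diagrams here are considered up to ambient isotopy of $\mathbb{S}^2$.

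With disjoint supports in hand, the two moves commute. Since $D_1$ agrees with $D$ outside $B_1$, and $B_2$ is disjoint from $B_1$, the move $m_2$ can be applied to $D_1$; let $D_3$ be the result. Symmetrically $m_1$ applies to $D_2$, and because each move alters the diagram only inside its own disk, both compositions produce the same diagram, namely $D$ with the kink of $m_1$ inserted in $B_1$ and the kink of $m_2$ inserted in $B_2$. Hence $D_1$ and $D_2$ are each transformed into this common $D_3$ by a single $\Omega_1^\uparrow$ move, which is exactly the configuration depicted in Figure \ref{fig:1-moves} (a).
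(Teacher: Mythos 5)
Your proof is correct and follows essentially the same route as the paper: both arguments observe that the two $\Omega_{1}^{\uparrow}$ moves are supported in disks that may be taken disjoint, so the moves commute and applying both yields the common diagram $D_{3}$. Your treatment of the same-edge case (sliding the kinks to distinct points of the arc) just makes explicit what the paper compresses into the phrase ``the two local disks may be taken as disjoint.''
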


\begin{proof}
An $\Omega_{1}^{\uparrow}$ move may be applied at a trivial $1$-tangle diagram without a crossing. This means that there are two trivial $1$-tangle diagrams without a crossing in $D$, where the two $\Omega_{1}^{\uparrow}$ moves are applied. The two $\Omega_{1}^{\uparrow}$ moves may be applied at the same time, since the two local disks may be taken as disjoint. Thus, Lemma \ref{lem:move_1} has been proven.

\end{proof}

\begin{lemma}\label{lem:move_1_seq}
Let $D$ be a link diagram. Let $D_{1}$, $D_{2}$ be two link diagrams that are transformed from $D$ by an $\Omega_{1}^{\uparrow}$ move and $n$ $\Omega_{1}^{\uparrow}$ moves (n$\in$$\mathbb{N}$), respectively. Then, there exists a link diagram $D_{3}$ that is transformed from $D_{1}$ by $n$ $\Omega_{1}^{\uparrow}$ moves and from $D_{2}$ by an $\Omega_{1}^{\uparrow}$ move. See Figure $\ref{fig:1-moves}$ (b) for the overview.

\end{lemma}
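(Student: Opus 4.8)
The plan is to prove the lemma by induction on $n$, with Lemma \ref{lem:move_1} serving both as the base case and as the single-step amalgamation tool in the inductive step. For $n = 1$ the assertion coincides with Lemma \ref{lem:move_1}: both $D_1$ and $D_2$ are obtained from $D$ by a single $\Omega_1^\uparrow$ move, and that lemma already supplies a common diagram $D_3$ reached from each of them by one $\Omega_1^\uparrow$ move.

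For the inductive step, I would fix a sequence of $n$ moves realizing the passage from $D$ to $D_2$ and split off its first move, writing it as $D \to E_1 \to \cdots \to E_n = D_2$. Thus $E_1$ is obtained from $D$ by a single $\Omega_1^\uparrow$ move, and $D_2$ is obtained from $E_1$ by $n-1$ such moves. First I would apply Lemma \ref{lem:move_1} to the two single moves $D \to D_1$ and $D \to E_1$, producing a diagram $F$ together with single $\Omega_1^\uparrow$ moves $D_1 \to F$ and $E_1 \to F$. Then I would invoke the induction hypothesis with $E_1$ in the role of the base diagram: from $E_1$ there is the single move $E_1 \to F$ and the length-$(n-1)$ sequence reaching $D_2$, so there exists a diagram $D_3$ obtained from $F$ by $n-1$ moves and from $D_2$ by a single move. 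Concatenating $D_1 \to F$ with the $(n-1)$-move sequence from $F$ to $D_3$ yields an $n$-move sequence from $D_1$ to $D_3$, while $D_2 \to D_3$ is a single move; this is exactly the conclusion of the lemma.

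The only delicate point is that every application of Lemma \ref{lem:move_1} must genuinely produce $\Omega_1^\uparrow$ moves out of both intermediate diagrams, that is, the local disk of one move must survive as a trivial $1$-tangle without a crossing after the other move has been carried out. This is ensured by the disjointness of the two local disks noted in the proof of Lemma \ref{lem:move_1}, and it is precisely this independence that allows the confluence square to close at each stage. I expect no real obstacle beyond carefully tracking the lengths of the two move sequences; the mathematical content is entirely concentrated in the single-step commutation of Lemma \ref{lem:move_1}, and the induction simply propagates it along the longer sequence.
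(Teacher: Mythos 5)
Your proof is correct and follows essentially the same route as the paper, whose entire argument is the one-line remark that Lemma \ref{lem:move_1} should be applied $n$ times; your induction on $n$ is exactly the careful formalization of that remark, closing one commutation square per step. No changes are needed.
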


\begin{proof}
By using Lemma \ref{lem:move_1} $n$ times, we can prove Lemma \ref{lem:move_1_seq} easily.  
\end{proof}

\begin{figure}[H]
\centering
\begin{subfigure}{0.49\textwidth}
\includegraphics[trim=-4cm 0 0 0, height=3.5cm, keepaspectratio]{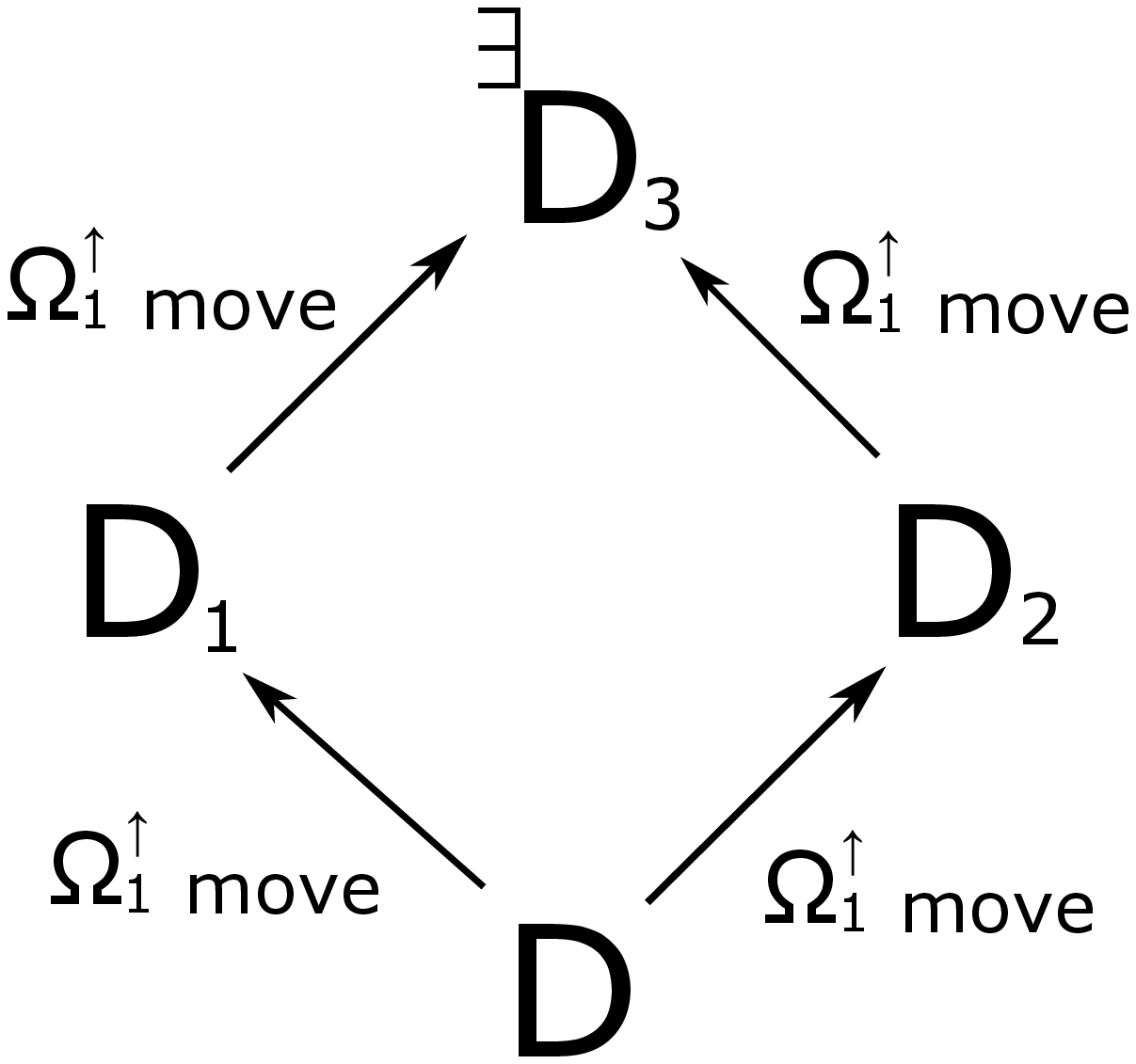}
\caption{the overview of Lemma \ref{lem:move_1}}
\end{subfigure}
\begin{subfigure}{0.49\textwidth}
\includegraphics[trim=-4cm 0 0 0, height=3.5cm, keepaspectratio]{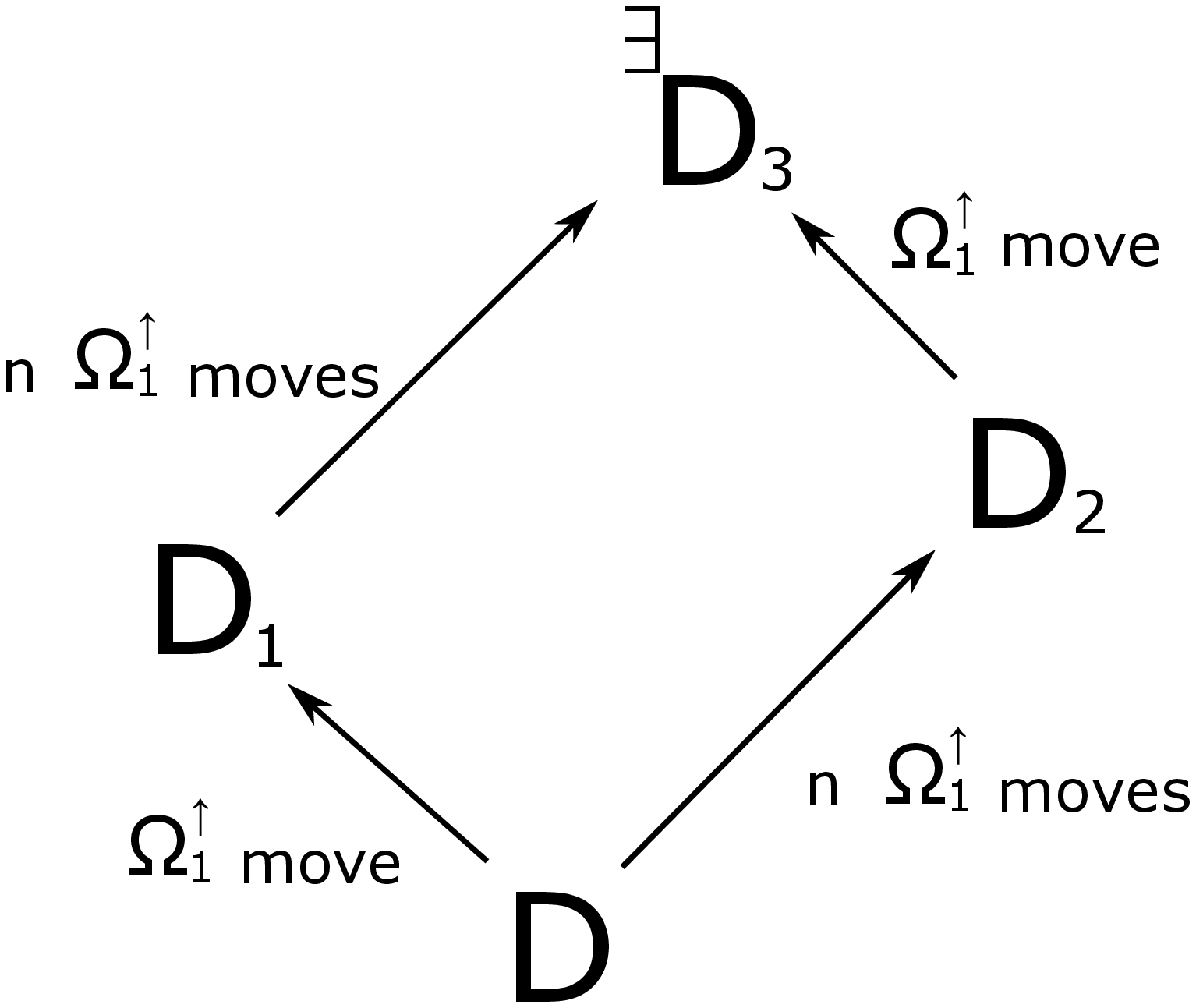}
\caption{the overview of Lemma \ref{lem:move_1_seq}}
\end{subfigure}
\caption{}
\label{fig:1-moves}
\end{figure}

\begin{lemma}\label{lem:move_3}
Let $D$ be a link diagram. Let $D_{1}$, $D_{2}$ be two link diagrams that are transformed from $D$ by an $\Omega_{1}^{\uparrow}$ move and an $\Omega_{3}$ move, respectively. There exists a link diagram $D_{3}$ that is transformed from $D_{1}$ by an $\Omega_{3}^{*}$ move and from $D_{2}$ by an $\Omega_{1}^{\uparrow}$ move. See Figure \ref{fig:3-moves} (a)  for the overview.

\end{lemma}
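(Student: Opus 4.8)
The plan is to treat this as a local confluence statement: I want to commute the $\Omega_1^{\uparrow}$ move past the $\Omega_3$ move, at the cost of upgrading the $\Omega_3$ move to an $\Omega_3^*$ move. First I would record the two local disks in which the given moves take place: the $\Omega_1^{\uparrow}$ move producing $D_1$ occurs inside a disk $E_1$ meeting $D$ in a single trivial (crossingless) arc, and the $\Omega_3$ move producing $D_2$ occurs inside a disk $E_3$ meeting $D$ in the standard triangle of three strands. After shrinking the disks if necessary, I may assume that $E_1$ and $E_3$ are either disjoint or nested with $E_1 \subseteq E_3$, since the single arc inside $E_1$ is either disjoint from $E_3$ or is a subarc of one of the three strands lying in $E_3$. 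In both cases my candidate for $D_3$ is obtained by applying to $D_2$ the same $\Omega_1^{\uparrow}$ move (on the arc tracked through the $\Omega_3$ move), and the work is to exhibit the map $D_1 \to D_3$ as an $\Omega_3^*$ move.

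In the disjoint case the two moves are supported on disjoint parts of $\mathbb{S}^2$ and therefore commute outright: applying the $\Omega_3$ move to $D_1$ inside $E_3$ (untouched by the kink) and applying the $\Omega_1^{\uparrow}$ move to $D_2$ inside $E_1$ (untouched by the triangle flip) produce the same diagram $D_3$. An ordinary $\Omega_3$ move is an $\Omega_3^*$ move in which all three rectangles are trivial, so $D_1 \to D_3$ is in particular an $\Omega_3^*$ move, as required.

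The nested case $E_1 \subseteq E_3$ is the substantive one. Here the kink created by the $\Omega_1^{\uparrow}$ move sits on one of the three strands of the triangle, so inside $E_3$ the diagram $D_1$ is exactly the triangle carrying a single rectangle (namely the kink, which is a rectangle since it arises from one $\Omega_1^{\uparrow}$ move) on that strand, with trivial rectangles on the other two. This is precisely the left-hand side of an $\Omega_3^*$ move in the sense of Definition \ref{def:rectangle}, and applying that move flips the triangle while carrying the rectangle unchanged, producing a diagram $D_3$ whose restriction to $E_3$ is the flipped triangle with the kink on the image strand. On the other side, $D_2$ restricted to $E_3$ is the flipped triangle with no kink, and applying the $\Omega_1^{\uparrow}$ move on the corresponding subarc of the image strand re-creates the same kink in the same position; outside $E_3$ nothing has changed, so this yields the identical diagram $D_3$. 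The requirement in Definition \ref{def:rectangle} that the two rectangles of the same color be equivalent as $1$-tangle diagrams is exactly the statement that the kink before and after the flip is the same rectangle, which is what makes the two descriptions of $D_3$ agree.

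I expect the main obstacle to be the bookkeeping in the nested case: one must check, over every position of the kink among the three strands of the triangle and over every choice of over/under information, that the rectangle survives the triangle flip in a single well-defined strand position and that this position is reachable by one $\Omega_1^{\uparrow}$ move on $D_2$. This is a finite figure-matching verification—made routine precisely because the $\Omega_3^*$ move was defined to absorb rectangles on the strands—rather than a genuine difficulty, and Figure \ref{fig:3-moves} (a) together with the example in Figure \ref{fig:RIII_example} supplies the template for each case.
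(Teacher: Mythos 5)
Your proposal is correct and takes essentially the same route as the paper's own proof: split into the case where the $\Omega_{1}^{\uparrow}$ disk and the $\Omega_{3}$ disk can be taken disjoint (where the two moves commute outright, an ordinary $\Omega_{3}$ move being an $\Omega_{3}^{*}$ move with trivial rectangles) and the case where the kink sits on a strand of the triangle (where the kink is treated as a rectangle and absorbed into an $\Omega_{3}^{*}$ move from $D_{1}$ to $D_{3}$, with $D_{3}$ differing from $D_{2}$ by one $\Omega_{1}^{\uparrow}$ move). Your write-up is in fact slightly more explicit than the paper's about the disjoint-or-nested reduction and the trivial-rectangle convention, but the underlying argument is the same.
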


\begin{proof}
When the two local regions of the $\Omega_{1}^{\uparrow}$ move and the $\Omega_{3}$ move in $D$ may be taken as disjoint, we may apply the two local moves separately. This means that the resulting diagram may be taken as $D_{3}$.

When the two local regions of the $\Omega_{1}^{\uparrow}$ move and the $\Omega_{3}$ move in $D_{1}$ may not be taken as disjoint, the local region of the $\Omega_{1}^{\uparrow}$ move in $D_{1}$ may be taken as a rectangle. We may consider the rectangle and the local disk of the $\Omega_{3}$ move in $D_{1}$ as a new $\Omega_{3}^{*}$ move from $D_{1}$ (to $D_
{3}$). The difference between the $D_{3}$ appeared right above and $D_{2}$ is just an $\Omega_{1}^{\uparrow}$ move. Thus, Lemma \ref{lem:move_3} has been proven.
\end{proof}

\begin{remark}\label{rem:move_3}
Even if we change the condition of $D_{2}$ in Lemma \ref{lem:move_3} into that $D_{2}$ is a link diagram that is transformed from $D$ with 'an $\Omega_{3}^{*}$ move', we may prove the Lemma in the same way as Lemma \ref{lem:move_3}. 
\end{remark}

\begin{lemma}\label{lem:move_3_seq}
Let $D$ be a link diagram. Let $D_{1}$, $D_{2}$ be two link diagrams that are transformed from $D$ by $n$ $\Omega_{1}^{\uparrow}$ moves and $m$ $\Omega_{3}$ moves, respectively. Then, there exists a link diagram $D_{3}$ that is transformed from $D_{1}$ by $m$ $\Omega_{3}^{*}$ moves and from $D_{2}$ by $n$ $\Omega_{1}^{\uparrow}$ moves. See Figure \ref{fig:3-moves} (b) for the overview.
\end{lemma}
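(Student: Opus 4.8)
The plan is to prove Lemma \ref{lem:move_3_seq} by a double induction that reduces the statement to the single-move commutation already established in Lemma \ref{lem:move_3} and its Remark \ref{rem:move_3}. The statement is a ``confluence'' diagram: starting from $D$, one branch applies $n$ $\Omega_{1}^{\uparrow}$ moves to reach $D_{1}$, the other applies $m$ $\Omega_{3}$ moves to reach $D_{2}$, and we must fill in the square with $m$ $\Omega_{3}^{*}$ moves on the $D_{1}$ side and $n$ $\Omega_{1}^{\uparrow}$ moves on the $D_{2}$ side. Since Lemma \ref{lem:move_3} already handles the base case $n=m=1$ (one $\Omega_{1}^{\uparrow}$ against one $\Omega_{3}$), and Remark \ref{rem:move_3} extends it to the case where the second move is itself an $\Omega_{3}^{*}$ move, the natural strategy is to commute the moves past each other one at a time, using these base cases as the elementary commutation step.

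First I would fix $n$ and induct on $m$, or equivalently set up the argument as a grid of commutation squares. Concretely, I would push the single $\Omega_{1}^{\uparrow}$ move (first taking $n=1$) through the $m$ $\Omega_{3}$ moves one by one: applying Lemma \ref{lem:move_3} to the first $\Omega_{3}$ move converts it into an $\Omega_{3}^{*}$ move and produces an intermediate diagram still differing from the next stage by a single $\Omega_{1}^{\uparrow}$ move; then Remark \ref{rem:move_3} lets me repeat this against the second, third, \dots, $m$-th move, since after the first step the move being commuted past may already be an $\Omega_{3}^{*}$ move. After $m$ iterations the single $\Omega_{1}^{\uparrow}$ move has been transported to the far side and all $m$ moves on the top branch have become $\Omega_{3}^{*}$ moves. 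This establishes the case $n=1$, arbitrary $m$.

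Next I would induct on $n$. Assuming the result for $n-1$ copies of the $\Omega_{1}^{\uparrow}$ move, I split the $n$ moves as ``$n-1$ moves, then one more $\Omega_{1}^{\uparrow}$ move.'' Applying the inductive hypothesis to the first $n-1$ moves fills in the corresponding commutation square, and then the $n=1$ case handles the remaining single $\Omega_{1}^{\uparrow}$ move against the $m$ moves (now $\Omega_{3}^{*}$ moves), again invoking Remark \ref{rem:move_3} so that the elementary step applies uniformly whether the opposing move is an $\Omega_{3}$ or an $\Omega_{3}^{*}$ move. Stacking these two sub-squares yields the full square for $n$, completing the induction. The resulting $D_{3}$ is transformed from $D_{1}$ by $m$ $\Omega_{3}^{*}$ moves and from $D_{2}$ by $n$ $\Omega_{1}^{\uparrow}$ moves, as required.

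The main obstacle I anticipate is bookkeeping rather than any genuinely new geometric content: I must make sure that when several $\Omega_{1}^{\uparrow}$ moves are present, the rectangle absorbed into a given $\Omega_{3}^{*}$ move in one commutation step does not interfere with the local disk of a later commutation step, so that the local disks can consistently be taken disjoint when they should be (the ``disjoint'' case of Lemma \ref{lem:move_3}) and the rectangle-absorption is well defined when they overlap. This is exactly the situation Lemma \ref{lem:move_3} and Remark \ref{rem:move_3} are designed to control at the level of a single pair of moves, so the induction merely needs to apply them to one move at a time against a fixed opposing move while keeping all the other local disks untouched; I would therefore emphasize that at each elementary step only the two relevant local disks are modified and the rest of the diagram is carried along unchanged. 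Hence the proof is a straightforward assembly of the already-proven single-step commutations via the grid/induction, and I would simply remark that iterating Lemma \ref{lem:move_3} and Remark \ref{rem:move_3} $nm$ times proves Lemma \ref{lem:move_3_seq}.
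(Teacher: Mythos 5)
Your proposal is correct and follows essentially the same route as the paper, whose entire proof is the remark that applying Lemma \ref{lem:move_3} and Remark \ref{rem:move_3} $n\times m$ times yields the result; your double induction simply makes explicit the grid of $n\times m$ commutation squares that the paper leaves implicit.
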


\begin{proof}
By applying Lemma \ref{lem:move_3} and Remark \ref{rem:move_3} $n$$\times$$m$ times, we may prove Lemma \ref{lem:move_3_seq} easily.

\end{proof}

\begin{figure}[H]
\centering
\begin{subfigure}{0.49\textwidth}
\includegraphics[height=3cm, keepaspectratio]{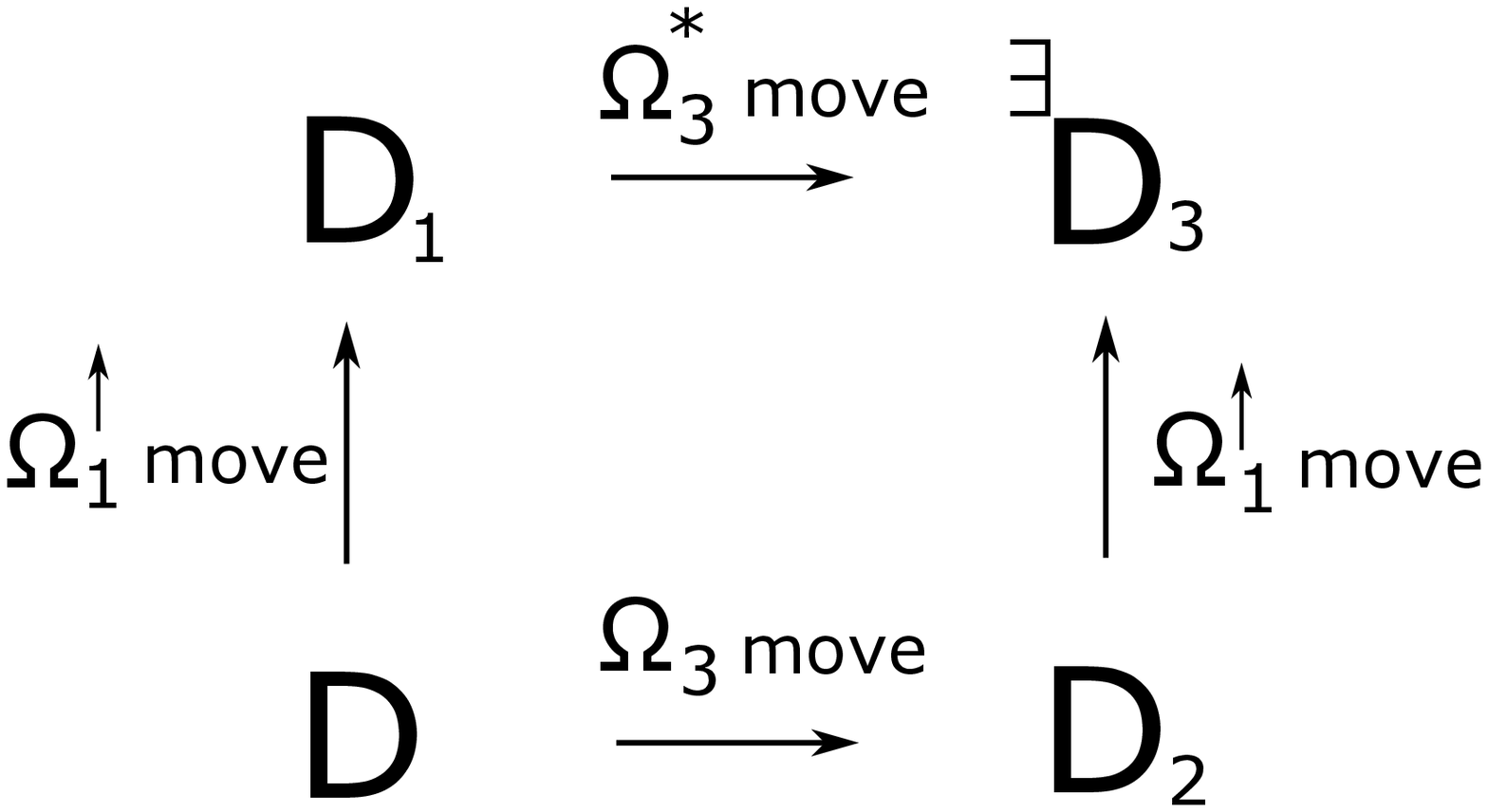}
\caption{the overview of Lemma \ref{lem:move_3}}
\end{subfigure}
\begin{subfigure}{0.49\textwidth}
\centering
\includegraphics[height=3cm, keepaspectratio]{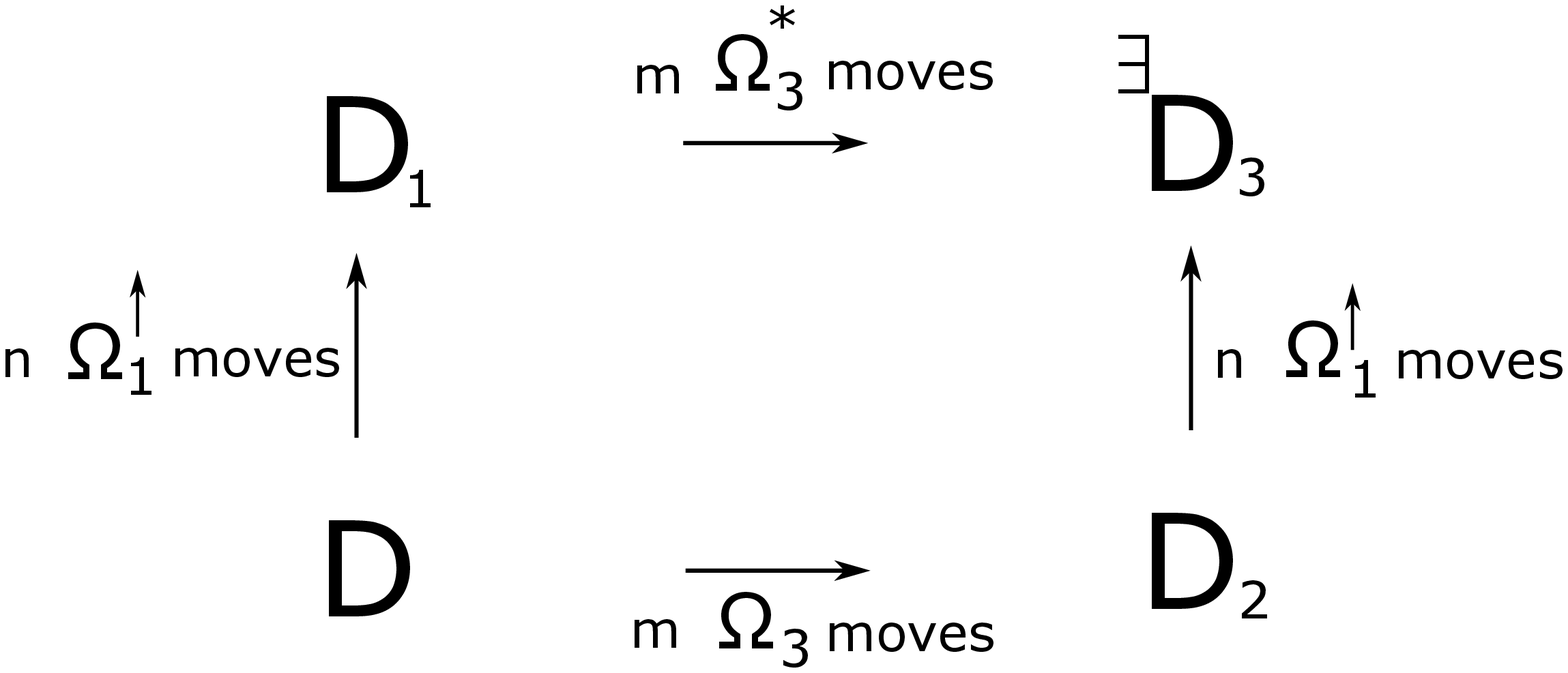}
\caption{the overview of Lemma \ref{lem:move_3_seq}}
\end{subfigure}
\caption{}
\label{fig:3-moves}
\end{figure}

\begin{remark}
Even if we change the condition of $D_{2}$ in Lemma \ref{lem:move_3_seq} into that $D_{2}$ is a link diagram that is transformed from $D$ by '$n$ $\Omega_{3}^{*}$ moves', we may prove the Lemma in the same way as Lemma \ref{lem:move_3_seq}. 

\end{remark}

\begin{theorem}\label{theo:generalized_ordered}
If two link diagrams $D_{1}$ and $D_{2}$ are transformed into each other by applying finitely many Reidemeister moves I, III, then there exists an I-generalized ordered sequence that transforms $D_{1}$ into $D_{2}$. Moreover, the I-generalized ordering sequence has the same numbers of $\Omega_{1}^{\uparrow}$ moves and $\Omega_{3}^{*}$ moves and $\Omega_{1}^{\downarrow}$ moves as the numbers of $\Omega_{1}^{\uparrow}$ moves and $\Omega_{3}$ moves and $\Omega_{1}^{\downarrow}$ moves in the sequence of finitely many Reidemeister moves I, III that we assumed first, respectively.

\end{theorem}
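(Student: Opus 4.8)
The plan is to treat the given finite sequence of Reidemeister I and III moves purely as a word in the three letters $\Omega_1^{\uparrow}$, $\Omega_1^{\downarrow}$, $\Omega_3$ and to sort this word into the shape (block of $\Omega_1^{\uparrow}$)(block of $\Omega_3^{*}$)(block of $\Omega_1^{\downarrow}$) by repeatedly transposing adjacent out-of-order pairs, in the spirit of a bubble sort. Assign the priority $\Omega_1^{\uparrow} < \Omega_3,\Omega_3^{*} < \Omega_1^{\downarrow}$, treating $\Omega_3$ and $\Omega_3^{*}$ as a single middle class, and call an adjacent pair an \emph{inversion} if the left move has strictly higher priority than the right one. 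There are exactly three kinds of inversion to remove: an $\Omega_3$ or $\Omega_3^{*}$ immediately followed by an $\Omega_1^{\uparrow}$; an $\Omega_1^{\downarrow}$ immediately followed by an $\Omega_1^{\uparrow}$; and an $\Omega_1^{\downarrow}$ immediately followed by an $\Omega_3$ or $\Omega_3^{*}$.

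First I would record that each of these three transpositions is legitimate, i.e. that an offending two-move subpath $X\to Y\to Z$ can be replaced by a subpath $X\to W\to Z$ with the same endpoints but the moves in the desired order. The engine is the reversibility of the moves together with the Lemmas already proved. For an $\Omega_3$ followed by $\Omega_1^{\uparrow}$, reverse the first move to obtain from $Y$ a span consisting of an $\Omega_3$ move to $X$ and an $\Omega_1^{\uparrow}$ move to $Z$; Lemma \ref{lem:move_3} then supplies $W$ with $X\xrightarrow{\Omega_1^{\uparrow}}W$ and $Z\xrightarrow{\Omega_3^{*}}W$, and reversing the latter gives the ordered subpath $X\xrightarrow{\Omega_1^{\uparrow}}W\xrightarrow{\Omega_3^{*}}Z$; the case of an $\Omega_3^{*}$ is identical using Remark \ref{rem:move_3}. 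The remaining two inversions use the same reversal trick: reversing the leading $\Omega_1^{\downarrow}$ turns it into an $\Omega_1^{\uparrow}$, so an $\Omega_1^{\downarrow}$-then-$\Omega_1^{\uparrow}$ subpath becomes a span of two $\Omega_1^{\uparrow}$ moves out of $Y$ handled by Lemma \ref{lem:move_1}, while an $\Omega_1^{\downarrow}$-then-$\Omega_3$ subpath becomes an $\Omega_1^{\uparrow}/\Omega_3$ span out of $Y$ handled by Lemma \ref{lem:move_3}; reversing the resulting $\Omega_1^{\uparrow}$ on the far side restores the trailing $\Omega_1^{\downarrow}$, yielding $X\xrightarrow{\Omega_1^{\uparrow}}W\xrightarrow{\Omega_1^{\downarrow}}Z$ and $X\xrightarrow{\Omega_3^{*}}W\xrightarrow{\Omega_1^{\downarrow}}Z$ respectively. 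Here I use that $\Omega_3$ and $\Omega_3^{*}$ moves are their own inverses and that an $\Omega_1^{\uparrow}$ reversed is an $\Omega_1^{\downarrow}$.

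Next I would run the sort: while the word is not in canonical order it has an adjacent inversion (a monotone word is exactly one with no adjacent inversion), and applying the corresponding transposition replaces two adjacent move-labels by those labels in the opposite order while leaving the relative order of every other move unchanged; hence the total number of inversions strictly decreases by one at each step, and the procedure terminates. Since each transposition sends one $\Omega_1^{\uparrow}$ to one $\Omega_1^{\uparrow}$, one $\Omega_1^{\downarrow}$ to one $\Omega_1^{\downarrow}$, and one middle move to one middle move (possibly converting an $\Omega_3$ into an $\Omega_3^{*}$, but never changing the counts of up- or down-moves), the numbers of $\Omega_1^{\uparrow}$ moves, $\Omega_1^{\downarrow}$ moves, and middle moves are invariant throughout. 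At the end the word is (block of $\Omega_1^{\uparrow}$)(block of $\Omega_3$/$\Omega_3^{*}$)(block of $\Omega_1^{\downarrow}$); finally I would observe that any surviving ordinary $\Omega_3$ move is an $\Omega_3^{*}$ move whose three rectangles are the trivial $1$-tangle (zero $\Omega_1^{\uparrow}$ moves), so the middle block is a sequence of $\Omega_3^{*}$ moves and we obtain an I-generalized ordered sequence with exactly the advertised move counts.

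I expect the main obstacle to be the separation of the $\Omega_1^{\uparrow}$ moves from the $\Omega_1^{\downarrow}$ moves, for which the excerpt provides no direct commutation lemma; the point I would stress is that reversing a down-move converts it into an up-move, which lets the up/up lemma (Lemma \ref{lem:move_1}) and the up/III lemma (Lemma \ref{lem:move_3}) do all the work after all. A secondary technical point is the degenerate case in which a down-move and the following up-move act at the very same monogon, so that the subpath returns to its start; there Lemma \ref{lem:move_1} may not apply verbatim, but the transposition is realized directly by inserting a kink at an arbitrary disjoint disk and immediately removing it, which keeps the endpoints and the up/down counts correct. The block versions Lemma \ref{lem:move_1_seq} and Lemma \ref{lem:move_3_seq} could be used to push whole blocks of up-moves past blocks of III-moves in one stroke, shortening the bookkeeping, but they are not logically required for the argument above.
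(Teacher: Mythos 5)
Your proposal is correct, and it runs on the same engine as the paper's proof: Lemma \ref{lem:move_3} and Remark \ref{rem:move_3} (and Lemma \ref{lem:move_1}), applied after reversing one move of an out-of-order pair so as to turn a composable pair into a span, then reversing back. The difference is the overall architecture. The paper argues by induction on the length of the given RI-III sequence: assuming the prefix has already been converted into an I-generalized ordered sequence ending at $D_{2}$, it appends the next move and pushes it into position in one stroke using the block lemmas (Lemma \ref{lem:move_1_seq} and Lemma \ref{lem:move_3_seq}) --- in effect an insertion sort. You instead run a bubble sort on the entire word, with the adjacent-inversion count as an explicit termination measure, which lets you dispense with the block lemmas altogether, exactly as you observe. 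Your write-up also makes explicit several points the paper leaves implicit: that the reverse of an $\Omega_{1}^{\downarrow}$ move is an $\Omega_{1}^{\uparrow}$ move and that $\Omega_{3}$ and $\Omega_{3}^{*}$ moves reverse to moves of the same kind; the degenerate cancelling pair $X\to Y\to X$, where the span lemma's ``disjoint disks'' proof does not apply verbatim; and the fact that a plain $\Omega_{3}$ move surviving to the end is an $\Omega_{3}^{*}$ move whose rectangles are trivial (zero $\Omega_{1}^{\uparrow}$ moves, allowed by Definition \ref{def:rectangle}) --- this last point is genuinely needed in your scheme, since an $\Omega_{3}$ move that never gets transposed is never converted, whereas in the paper's induction every $\Omega_{3}$ move is converted to $\Omega_{3}^{*}$ the moment it is appended. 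Both arguments preserve the three move counts for the same reason: every elementary step keeps exactly one move in each class.
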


\begin{proof}
By Definition \ref{def:I-generalized}, the same two link diagrams are an example of two link diagrams that have an I-generalized ordered sequence (i.e. no move) between them. 

Let $D_{1}$, $D_{2}$ be two link diagrams that are transformed into each other by applying an I-generalized ordered sequence. Let $\mathcal{D}_{3}$ be the set of link diagrams that are transformed from $D_{2}$ by an $\Omega_{1}^{\uparrow}$ move or an $\Omega_{1}^{\downarrow}$ move or an $\Omega_{3}$ move. From here onward, to prove Theorem \ref{theo:generalized_ordered}, we are going to check that there exists a new I-generalized ordered sequence that transforms $D_{1}$ into every diagram of $\mathcal{D}_{3}$.

When a diagram $D_{3}$ of $\mathcal{D}_{3}$ is transformed from $D_{2}$ with an $\Omega_{1}^{\downarrow}$ move, the sequence that consists of the I-generalized ordered sequence we assumed right above and the $\Omega_{1}^{\downarrow}$ move is the new I-generalized ordered sequence that transforms $D_{1}$ into $D_{3}$. Moreover, in this argument, since $\Omega_{1}^{\downarrow}$ move is added to both the sequence of finitely many Reidemeister moves I, III and the I-generalized ordered sequence, the numbers of $\Omega_{1}^{\downarrow}$ moves in the two resulting sequences remain the same. In this argument, the numbers of $\Omega_{1}^{\downarrow}$ moves and $\Omega_{3}^{*}$ moves and $\Omega_{3}$ moves do not change.

When a diagram $D_{3}$ of $\mathcal{D}_{3}$ is transformed from $D_{2}$ with an $\Omega_{1}^{\uparrow}$ move, by using Lemma \ref{lem:move_1_seq} and Lemma \ref{lem:move_3_seq} repeatedly on the I-generalized
ordered sequence that transforms $D_{1}$ into $D_{2}$ and the $\Omega_{1}^{\uparrow}$ move that transforms $D_{2}$ into $D_{3}$, we may construct a new I-generalized sequence that transforms $D_{1}$ into $D_{3}$.  Moreover, in this argument, since an $\Omega_{1}^{\uparrow}$ move is added to both the sequence of finitely many Reidemeister moves I, III and the I-generalized ordered sequence, the numbers of $\Omega_{1}^{\uparrow}$ moves in the two resulting sequences remain the same. In this argument, the numbers of $\Omega_{1}^{\downarrow}$ moves and $\Omega_{3}^{*}$ moves and $\Omega_{3}$ moves do not change.

When a diagram $D_{3}$ of $\mathcal{D}_{3}$ is transformed from $D_{2}$ by an $\Omega_{3}$ move, by using Lemma \ref{lem:move_3} repeatedly, we may construct a new I-generalized ordered sequence that transforms $D_{1}$ into $D_{3}$. Moreover, in this argument, since the $\Omega_{3}$ move is added to the sequence of finitely many Reidemeister moves I, III and an $\Omega_{3}^{*}$ move is added to the I-generalized ordered sequence, the number of $\Omega_{3}$ moves and the number of $\Omega_{3}^{*}$ moves remain the same. In this argument, the numbers of $\Omega_{1}^{\downarrow}$ moves and $\Omega_{3}^{*}$ moves and $\Omega_{3}$ moves do not change.

Thus, The proof of Thoerem \ref{theo:generalized_ordered} is completed.
\end{proof}

\end{document}